\theoremstyle{bfupright head,slanted body}
\newtheorem{res}{}[section]             \newtheorem*{res*}{}
\theoremstyle{bfupright head,upright body}
               \newtheorem*{bfhpg*}{}
\theoremstyle{numbered paragraph}
\theoremstyle{fixed bf head,slanted body}
\newtheorem{thm}[res]{Theorem}          \newtheorem*{thm*}{Theorem}
      \newtheorem*{prp*}{Proposition}
        \newtheorem*{cor*}{Corollary}
            \newtheorem*{lem*}{Lemma}
\theoremstyle{fixed bf head,upright body}
       \newtheorem*{dfn*}{Definition}
      \newtheorem*{obs*}{Observation}
           \newtheorem*{rmk*}{Remark}
          \newtheorem*{exa*}{Example}
\newcommand{\pgref}[1]{(\ref{#1})}
\newcommand{\thmref}[2][Theorem~]{#1\pgref{thm:#2}}
\renewcommand{\eqref}[1]{\pgref{eq:#1}}
\newcommand{\is}{\cong}
\newcommand{\Ker}[1]{\nobreak{\operatorname{Ker}#1}}
\newcommand{\Coker}[1]{\nobreak{\operatorname{Coker}#1}}
\newcommand{\fd}[2][R]{\operatorname{fd}_{#1}#2}
\newcommand{\fdd}[2][\widehat{R}]{\operatorname{fd}_{#1}#2}
\newcommand{\dif}[2][]{{\partial}^{#2}_{#1}}
\newcommand{\Co}[2][]{\operatorname{C}_{#1}(#2)}
\newcommand{\Cy}[2][]{\operatorname{Z}_{#1}(#2)}
\renewcommand{\H}[2][]{\operatorname{H}_{#1}(#2)}
\newcommand{\id}[2][R]{\operatorname{id}_{#1}#2}
\newcommand{\idd}[2][\widehat{R}]{\operatorname{id}_{#1}#2}
\newcommand{\pd}[2][R]{\operatorname{pd}_{#1}#2}
\newcommand{\Hom}[3][R]{\operatorname{Hom}_{#1}(#2,#3)}
\newcommand{\RHom}[3][\widehat{R}]{\operatorname{\mathbf{R}Hom}_{#1}(#2,#3)}
\newcommand{\RHomm}[3][R]{\operatorname{\mathbf{R}Hom}_{#1}(#2,#3)}
\newcommand{\Ext}[4][R]{\operatorname{Ext}_{#1}^{#2}(#3,#4)}
\newcommand{\tp}[3][R]{\nobreak{#2\otimes_{#1}#3}}
\newcommand{\Ltp}[3][\widehat{R}]{\nobreak{#2\otimes_{#1}^{\mathbf{L}}#3}}
\newcommand{\Ltpp}[3][R]{\nobreak{#2\otimes_{#1}^{\mathbf{L}}#3}}
\newcommand{\Tor}[4][R]{\operatorname{Tor}^{#1}_{#2}(#3,#4)}
\newcommand{\Gidd}[2][R]{\operatorname{Gid}_{#1}#2}
\newcommand{\Gid}[2][\widehat{R}]{\operatorname{Gid}_{#1}#2}
\newcommand{\lra}{\longrightarrow}
\newcommand{\xra}[2][]{\xrightarrow[#1]{\;#2\;}}
\begin{document}

\numberwithin{equation}{section}

\title{Gorenstein modules and Auslander categories}

\author{\|Dejun |Wu|\ \ \ \ \ \ \|Yongduo |Wang|}



\abstract
In this paper, some new characterizations on Gorenstein projective, injective and flat modules over commutative noetherian local ring are given. For instance, it is shown that an $R$-module $M$ is Gorenstein projective if and only if the Matlis dual $\Hom{M}{E(k)}$ belongs to Auslander category $\mathcal{B}(\widehat{R})$ and $\Ext{i\geq1}{M}{P}=0$ for all projective $R$-modules $P$.
\endabstract

\keywords
Gorenstein projective module; Matlis dual; Preenvelope; Precover
\endkeywords

\subjclass
13D07; 13C13
\endsubjclass

\thanks
   The research has been supported by National Natural Science Foundation of
China (No. 11301242).
\endthanks

\section{Introduction}

Throughout this paper, $(R,\mathfrak{m},k)$ is a commutative noetherian local ring and an $R$-complex is
a complex of $R$-modules. The derived category is written
$\mathcal{D}(R)$. A complex
\begin{equation*}
  X:\quad \cdots \lra X_{i+1} \xra{\dif[i+1]{X}} X_i \xra{\dif[i]{X}}
  X_{i-1} \lra \cdots
\end{equation*}
is called \emph{acyclic} if the homology complex $\H{X}$ is the
zero-complex. We use the notations $\Cy[i]{X}$ for the kernel of
differential $\dif[i]{X}$ and $\Co[i]{X}$ for the cokernel of the
differential $\dif[i+1]{X}$. The projective, injective
and flat dimensions of $X$ are abbreviated as $\pd{X}$, $\id{X}$ and
$\fd{X}$. The full subcategories $\mathcal{P}(R)$,
$\mathcal{I}(R)$ and $\mathcal{F}(R)$ of $\mathcal{D}(R)$
consist of complexes of finite projective, injective and flat dimensions. We use the standard
notations $\RHomm{-}{-}$ and $\Ltpp{-}{-}$ for
the derived Hom and derived tensor product of complexes.

An acyclic complex $T$ of projective $R$-modules is called
\emph{totally acyclic}, if the complex $\Hom{T}{Q}$ is acyclic for
every projective $R$\nobreakdash-module $Q$. An $R$-module $M$ is
called \emph{Gorenstein projective} if there exists such a totally
acyclic complex $T$ with $\Co[0]{T} \is M$. An acyclic complex $F$ of flat $R$-modules is called \emph{totally acyclic}, if the complex $\tp{J}{F}$ is acyclic for
every injective $R$\nobreakdash-module $J$. An $R$-module $N$ is
called \emph{Gorenstein flat} if there exists such a totally
acyclic complex $F$ with $\Co[0]{F} \is N$. In \cite{Esmkhani2007J}, Esmkhani and Tousi proved that an $R$-module $M$ is Gorenstein projective if and only if $\tp{\widehat{R}}{M}\in\mathcal{A}(\widehat{R})$ and $\Ext{i\geq1}{M}{P}=0$ for all projective $R$-modules $P$; an $R$-module $M$ is Gorenstein flat if and only if $\tp{\widehat{R}}{M}\in\mathcal{A}(\widehat{R})$ and $\Tor{i\geq1}{E}{M}=0$ for all injective $R$-modules $E$, where $\widehat{R}$ is the $\mathfrak{m}$-adic completion of $(R,\mathfrak{m},k)$ and $\mathcal{A}(\widehat{R})$ is the Auslander category of $\widehat{R}$, consisting of those homologically bounded $\widehat{R}$-complexes $X$ for which $\Ltp{D}{X}$ is a homologically bounded $\widehat{R}$-complex and the canonical morphism $X\rightarrow \RHom{D}{\Ltp{D}{X}}$ is an isomorphism in $\mathcal{D}(\widehat{R})$, where $D$ is the dualizing complex of $\widehat{R}$. Note that $\widehat{R}$ is a faithfully flat $R$-module. It is well known that $E(k)$, the injective hull of the residue field, is an $\widehat{R}$-module.   Here we have the following results.\vspace{0.2cm}\\
\textbf{Theorem A.}\  \emph{An $R$-module $M$ is Gorenstein projective if and only if the Matlis dual $\Hom{M}{E(k)}\in\mathcal{B}(\widehat{R})$ and $\Ext{i\geq1}{M}{P}=0$ for all projective $R$-modules $P$.}\vspace{0.2cm}\\
\textbf{Theorem B.}\  \emph{An $R$-module $M$ is Gorenstein flat if and only if the Matlis dual $\Hom{M}{E(k)}\in\mathcal{B}(\widehat{R})$ and $\Tor{i\geq1}{I}{M}=0$ for all injective $R$-modules $I$.}

\vspace{0.2cm}A complex $U$ of injective $R$\nobreakdash-modules is called
\emph{totally acyclic} if it is acyclic, and the complex
$\Hom{J}{U}$ is acyclic for every injective $R$-module $J$.
An $R$-module $E$ is called \emph{Gorenstein injective} if there
exists a totally acyclic complex $U$ of injective $R$-modules with
$\Cy[0]{U} \is E$. In \cite{Esmkhani2007Arch}, Esmkhani and Tousi proved that an $R$-module $M$ is Gorenstein injective if and only if $\Hom{\widehat{R}}{M}\in\mathcal{B}(\widehat{R})$, $M$ is cotorsion and $\Ext{i\geq1}{E}{M}=0$ for all injective $R$-modules $E$, where $\mathcal{B}(\widehat{R})$ is the Auslander category of $\widehat{R}$, consisting of those homologically bounded $\widehat{R}$-complexes $X$ for which $\RHom{D}{X}$ is a homologically bounded $\widehat{R}$-complex and the canonical morphism $\Ltp{D}{\RHom{D}{X}}\rightarrow X$ is an isomorphism in $\mathcal{D}(\widehat{R})$, where $D$ is the dualizing complex of $\widehat{R}$. Here we also have the next result. \vspace{0.2cm}\\
\textbf{Theorem C.}\  \emph{A finitely generated $R$-module $M$ is Gorenstein injective if and only if $\tp{\widehat{R}}{M}\in\mathcal{B}(\widehat{R})$ and $\Ext{i\geq1}{E}{M}=0$ for all injective $R$-modules $E$.}

\section{Main results}

Let $(R,\mathfrak{m},k)$ be a commutative noetherian local ring and $M$ an $R$-module. We use the notation $M^v$ for the Matlis dual $\Hom{M}{E(k)}$ of $M$. There is a natural homomorphism $\varphi:M\rightarrow M^{vv}$ defined by $\varphi(x)(f)=f(x)$ for $x\in M$ and $f\in M^v$. It is well known that the canonical map $\varphi$ is an embedding; see \cite[3.4]{Enochs}. Recall that the \emph{Gorenstein injective dimension}, $\Gidd{M}$, of an $R$-module $M$ is defined by declaring that $\Gidd{M}\leq n$ if and only if $M$ has a Gorenstein injective resolution of length $n$; see \cite[Definition 2.8]{Holm2004}.

\begin{thm}\label{thm:GP}
An $R$-module $M$ is Gorenstein projective if and only if the Matlis dual $\Hom{M}{E(k)}\in\mathcal{B}(\widehat{R})$ and $\Ext{i\geq1}{M}{P}=0$ for all projective $R$-modules $P$.
\end{thm}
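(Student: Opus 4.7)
The plan is to combine the Esmkhani--Tousi characterization of Gorenstein projectivity recalled in the introduction with a Matlis-duality correspondence between the Auslander classes $\mathcal{A}(\widehat{R})$ and $\mathcal{B}(\widehat{R})$. Since the vanishing condition $\Ext{i\geq 1}{M}{P}=0$ already appears on both sides of the desired biimplication and of the Esmkhani--Tousi criterion, it will be enough to establish the equivalence
\begin{equation*}
\tp{\widehat{R}}{M}\in\mathcal{A}(\widehat{R}) \;\Longleftrightarrow\; \Hom{M}{E(k)}\in\mathcal{B}(\widehat{R}).
\end{equation*}

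First I would set $X=\tp{\widehat{R}}{M}$ and observe that the Matlis dual of $M$ coincides with that of $X$ as an $\widehat{R}$-module: Hom--tensor adjunction for the ring map $R\to\widehat{R}$ yields
\begin{equation*}
X^{v}=\Hom{\tp{\widehat{R}}{M}}{E(k)}\is\Hom[\widehat{R}]{\tp{\widehat{R}}{M}}{E(k)}\is\Hom{M}{E(k)}=M^{v}.
\end{equation*}
Thus the problem reduces to showing, for an arbitrary $\widehat{R}$-module $X$, that $X\in\mathcal{A}(\widehat{R})$ if and only if $X^{v}\in\mathcal{B}(\widehat{R})$.

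The key computational input is the swap identity
\begin{equation*}
\RHom{D}{X^{v}}\simeq(\Ltp{D}{X})^{v}
\end{equation*}
in $\mathcal{D}(\widehat{R})$, which follows from Hom--tensor adjunction and the injectivity of $E(k)$ over $\widehat{R}$. Since $E(k)$ is a faithful injective cogenerator, Matlis duality $(-)^{v}$ preserves and reflects homological boundedness of $\widehat{R}$-complexes, so the boundedness clause in the definition of $\mathcal{A}(\widehat{R})$ for $X$ matches the boundedness clause in the definition of $\mathcal{B}(\widehat{R})$ for $X^{v}$. Applying $(-)^{v}$ to the Foxby unit $X\to\RHom{D}{\Ltp{D}{X}}$ and rewriting via the same adjunction produces the Foxby counit $\Ltp{D}{\RHom{D}{X^{v}}}\to X^{v}$; faithfulness of $(-)^{v}$ then upgrades one being an isomorphism in $\mathcal{D}(\widehat{R})$ to the other being so.

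The step I expect to be the main obstacle is the naturality check: one has to verify carefully that under Matlis duality the two structural morphisms (the unit defining $\mathcal{A}(\widehat{R})$ and the counit defining $\mathcal{B}(\widehat{R})$) correspond up to the adjunction isomorphism displayed above, so that reflexivity really does transfer across $(-)^{v}$ rather than merely the homological conditions. Once this Matlis-duality equivalence of Auslander classes is in hand, \thmref{GP} follows by substitution into the Esmkhani--Tousi characterization \cite{Esmkhani2007J}.
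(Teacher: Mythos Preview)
Your argument is correct but follows a genuinely different route from the paper. The paper does not reduce to the Esmkhani--Tousi criterion at all; instead it proves the converse implication from scratch by constructing a co-proper right projective resolution of $M$ directly. Concretely, from $M^{v}\in\mathcal{B}(\widehat{R})$ the paper extracts (via \cite[Lemma~2.18]{Christensen2006} and Matlis duality) a monomorphism $M\hookrightarrow M^{vv}\hookrightarrow H^{v}$ with $\fd{H^{v}}<\infty$, then uses the Ext-vanishing hypothesis to lift a flat preenvelope of $M$ to a monic $\mathcal{P}(R)$-preenvelope $M\hookrightarrow P$ with $P$ projective, and iterates. Your approach, by contrast, packages the whole converse into the single equivalence $\tp{\widehat{R}}{M}\in\mathcal{A}(\widehat{R})\Leftrightarrow M^{v}\in\mathcal{B}(\widehat{R})$, obtained from the adjunction identity $(\tp{\widehat{R}}{M})^{v}\is M^{v}$ together with the standard Matlis-duality swap between Auslander classes over the complete ring $\widehat{R}$. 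Your route is shorter and more conceptual, and the naturality check you flag (that the Foxby unit and counit correspond under $(-)^{v}$) is routine once one fixes the adjunction isomorphism $\RHom{D}{X^{v}}\simeq(\Ltp{D}{X})^{v}$; the paper's hands-on construction is longer but more self-contained and reuses the same preenvelope machinery in the proofs of the companion theorems on Gorenstein flat and injective modules.
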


\begin{proof}
Let $M$ be a Gorenstein projective $R$-module. By \cite[Ascent table II(e)]{Christensen2009}, $\Hom{M}{E(k)}$ is a Gorenstein injective $\widehat{R}$-module. Hence it follows from \cite[Theorem 4.4 and Lemma 2.1]{Christensen2006} that the Matlis dual $\Hom{M}{E(k)}\in\mathcal{B}(\widehat{R})$ and $\Ext{i\geq1}{M}{P}=0$ for all projective $R$-modules $P$.

Conversely, it is enough to show that $M$ admits a co-proper right projective resolution; see \cite[Proposition 2.3]{Holm2004}. Since $\Hom{M}{E(k)}\in\mathcal{B}(\widehat{R})$, $\Gid{\Hom{M}{E(k)}}$ is finite by \cite[Theorem 4.4]{Christensen2006}. It follows from \cite[Lemma 2.18]{Christensen2006} that there is an exact sequence of $\widehat{R}$-modules $H\rightarrow \Hom{M}{E(k)}\rightarrow 0$, where $\idd{H}=\Gid{\Hom{M}{E(k)}}$. An application of $\Hom{-}{E(k)}$ yields the exact sequence
$0\rightarrow M^{vv}\rightarrow \Hom{H}{E(k)}.$
By \cite[Theorem 4.5(I)]{Avramov1991}, one has $\fdd{\Hom{H}{E(k)}}\leq \idd{H}$ and so $\fdd{\Hom{H}{E(k)}}$ is finite. Since every flat $\widehat{R}$-module is also flat as an $R$-module, the flat dimension of $\Hom{H}{E(k)}$ is finite as an $R$-module. Consequently, there exists a monomorphism $\alpha:M\rightarrow \Hom{H}{E(k)}$ with $\fd{\Hom{H}{E(k)}}$ finite. By \cite[Proposition 6.5.1]{Enochs}, there is a flat preenvelope $f:M\rightarrow F$.
Next we show that $f$ is an $\mathcal{F}(R)$-preenvelope.

Let $\psi:M\rightarrow L$ be an $R$-homomorphism such that $\fd{L}$ is finite and let $0\rightarrow K\rightarrow F'\rightarrow L\rightarrow 0$ be an exact sequence such that $\pi:F'\rightarrow L$ is a flat cover. Clearly, $K$ is of finite flat dimension and so $K$ is of finite projective dimension. By hypothesis and induction on projective dimension of $K$, one has $\Ext{i\geq1}{M}{K}=0$. Thus, one has the following exact sequence
$$0\longrightarrow\Hom{M}{K}\longrightarrow\Hom{M}{F'}\longrightarrow
\Hom{M}{L}\longrightarrow0.$$
Therefore, there exists an $R$-homomorphism $h:M\rightarrow F'$ such that $\pi h=\psi$. Since $f:M\rightarrow F$ is a flat preenvelope, there is an $R$-homomorphism $g:F\rightarrow F'$ such that $h=gf$. Thus, one has $\pi gf=\psi$ and so $f$ is an $\mathcal{F}(R)$-preenvelope. Hence there exists an $R$-homomorphism $\theta:F\rightarrow\Hom{H}{E(k)}$ such that $\theta f=\alpha$. Note that $f$ is monic for $\alpha$ is a monomorphism.

Next we show that there exists a monic $\mathcal{P}(R)$-preenvelope $M\rightarrow P$ with $P$ projective. It is easy to see $f$ is also a $\mathcal{P}(R)$-preenvelope. Notice that $f:M\rightarrow F$ is monic as $\alpha:M\rightarrow \Hom{H}{E(k)}$ is monic and $\pd{\Hom{H}{E(k)}}$ is finite. Let $0\rightarrow A\rightarrow P\rightarrow F\rightarrow0$ be an exact sequence with $P$ projective. Clearly, one has $\pd{A}<\infty$. It is easy to see that $\Ext{i\geq1}{M}{A}=0$. Therefore, there exists a monic $\mathcal{P}(R)$-preenvelope $M\rightarrow P$ with $P$ projective.

Now consider the following exact sequence $$\xymatrix{
 &0 \ar[r]  &M \ar[r]^{\beta}  &P  \ar[r] &C  \ar[r] & 0, }$$
where $\beta$ is a $\mathcal{P}(R)$-preenvelope, $P$ is a projective $R$-module and $C=\Coker{\beta}$. Let $Q$ be a projective $R$-module. Applying the functor $\Hom{-}{Q}$ to the above exact sequence, one has $\Ext{i\geq1}{C}{Q}=0$ for $\beta:M\rightarrow P$ is a $\mathcal{P}(R)$-preenvelope. It is not hard to see that $\Hom{C}{E(k)}\in\mathcal{B}(\widehat{R})$. Now proceeding in this manner, one could get the desired co-proper right projective resolution of $M$. This completes the proof.
\end{proof}

\begin{thm}
An $R$-module $M$ is Gorenstein flat if and only if the Matlis dual $\Hom{M}{E(k)}\in\mathcal{B}(\widehat{R})$ and $\Tor{i\geq1}{I}{M}=0$ for all injective $R$-modules $I$.
\end{thm}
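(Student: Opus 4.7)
The plan is to follow \thmref{GP} closely, replacing ``projective'' by ``flat'' and $\Ext$ by $\Tor$ where needed; the Auslander category $\mathcal{B}(\widehat{R})$ remains the common machinery.

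For the forward direction, start from a totally acyclic complex $T$ of flat $R$-modules with $\Co[0]{T}\is M$ and Matlis dualise term by term. Each $T_i^v\is\Homm{\tp{\widehat{R}}{T_i}}{E(k)}$ is injective over $\widehat{R}$ (since $\tp{\widehat{R}}{T_i}$ is $\widehat{R}$-flat), and the identification $\Homm{J}{T^v}\is\tpP{J}{T}^v$ for injective $\widehat{R}$-modules $J$ (which are also $R$-injective, as $\widehat{R}$ is $R$-flat) transports total acyclicity of $T$ over $R$ to total acyclicity of $T^v$ over $\widehat{R}$. Thus $M^v=\Cy[0]{T^v}$ is Gorenstein injective over $\widehat{R}$, so $M^v\in\mathcal{B}(\widehat{R})$ by \cite[Theorem 4.4]{Christensen2006}; the vanishing $\Tor{i\geq1}{I}{M}=0$ is immediate from acyclicity of $\tp{I}{T}$.

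For the converse direction, I would construct a right flat resolution $0\to M\to F^0\to F^1\to\cdots$ that remains exact after tensoring with any injective $R$-module, and then splice it with an ordinary flat resolution of $M$ (already $I\otimes$-acyclic by the Tor hypothesis) to obtain the totally acyclic complex witnessing Gorenstein flatness. From $M^v\in\mathcal{B}(\widehat{R})$ I extract, via \cite[Lemma 2.18]{Christensen2006} and \cite[Theorem 4.5(I)]{Avramov1991}, a monomorphism $\alpha:M\to H^v$ with $\fd{H^v}$ finite, exactly as in the converse of \thmref{GP}. Take a flat preenvelope $f:M\to F$ from \cite[Proposition 6.5.1]{Enochs}; since $\alpha$ factors through $f$, $f$ is monic. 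The essential new observation is that $f$ is automatically ``pure with respect to injectives'': for any injective $R$-module $I$ one has $\fd{I^v}\leq\id{I}=0$ by \cite[Theorem 4.5(I)]{Avramov1991}, so $I^v$ is flat; the preenvelope property of $f$ then makes $\Hom{F}{I^v}\to\Hom{M}{I^v}$ surjective, which by adjunction and the faithfulness of Matlis duality over the local ring $R$ translates into injectivity of $\tp{I}{M}\to\tp{I}{F}$.

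Set $C=\Coker{f}$. The injectivity just established gives $\Tor{1}{I}{C}=0$ for every injective $I$, and combined with the Tor hypothesis on $M$ and the flatness of $F$, dimension shifting in the long exact Tor sequence yields $\Tor{i\geq1}{I}{C}=0$. Matlis dualising $0\to M\to F\to C\to0$ produces a short exact sequence $0\to C^v\to F^v\to M^v\to0$ with $F^v$ an injective $\widehat{R}$-module, and the two-out-of-three property of $\mathcal{B}(\widehat{R})$ forces $C^v\in\mathcal{B}(\widehat{R})$. Hence $C$ inherits the full hypotheses on $M$, and iteration builds the desired right flat resolution. The main technical point I expect to check carefully is the automatic purity-with-respect-to-injectives of the flat preenvelope, which rests on the Avramov--Foxby flat--injective dimension exchange together with faithfulness of Matlis duality; once this is in place, the remainder parallels \thmref{GP} without having to re-derive any $\Ext$-vanishing from the Tor hypothesis.
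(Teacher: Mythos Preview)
Your proposal is essentially the paper's own argument: the forward direction is a hands-on version of the ascent result the paper cites, and your converse---extract a monomorphism into a module of finite flat dimension, take a flat preenvelope, use that $\Hom{I}{E(k)}$ is flat plus adjunction and faithfulness of $E(k)$ to get $I\otimes$-purity, then iterate---matches the paper step for step. The one place to tighten is your line ``since $\alpha$ factors through $f$'': the target $H^v$ has only finite flat dimension, so this factoring needs $f$ to be an $\mathcal{F}(R)$-preenvelope, which in \thmref{GP} was obtained from the $\Ext$-vanishing hypothesis not present here; the paper covers this with ``by analogy,'' and you should be prepared to justify why the upgrade still goes through (e.g.\ via the Ext--Tor duality $\Ext{i}{M}{\Hom{I}{J}}\cong\Hom{\Tor{i}{I}{M}}{J}$ for injective $J$, applied to the cotorsion kernels arising from iterated flat covers).
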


\begin{proof}
Let $M$ be a Gorenstein flat $R$-module. By \cite[Ascent table II(d)]{Christensen2009}, the Matlis dual $\Hom{M}{E(k)}$ is a Gorenstein injective $\widehat{R}$-module. Hence it follows from \cite[Theorem 4.4 and Lemma 2.3]{Christensen2006} that $\Hom{M}{E(k)}\in\mathcal{B}(\widehat{R})$ and $\Tor{i\geq1}{I}{M}=0$ for all injective $R$-modules $I$.

Conversely, it is enough to show that $M$ admits a co-proper right flat resolution; see \cite[Theorem 3.6]{Holm2004}. By analogy with the proof of \thmref{GP}, there exists a monic $\mathcal{F}(R)$-preenvelope $f:M\rightarrow F$ with $F$ flat. Hence one has the following exact sequence
$$\xymatrix{
 &0 \ar[r]  &M \ar[r]^{f}  &F  \ar[r] &C  \ar[r] & 0, }$$
where $C=\Coker{f}$. If $F'$ is a flat $R$-module, then one has the exact sequence
$$0\rightarrow\Hom{C}{F'}\rightarrow\Hom{F}{F'}\rightarrow\Hom{M}{F'}\rightarrow0$$
for $f$ is an $\mathcal{F}(R)$-preenvelope. Since $\Hom{I}{E(k)}$ is a flat $R$-module, the next sequence
\begin{equation*}
\begin{split}
\xymatrix{
& 0 \ar[r]  & \Hom{C}{\Hom{I}{E(k)}}  }\hspace{-0.2cm}& \xymatrix{\ar[r] & \Hom{F}{\Hom{I}{E(k)}} }\\
 & \xymatrix{ \ar[r]  & \Hom{M}{\Hom{I}{E(k)}} \ar[r] & 0   }
\end{split}
\end{equation*}
is exact. By adjointness, one also has the following exact sequence
$$0\rightarrow\Hom{\tp{C}{I}}{E(k)}\rightarrow\Hom{\tp{F}{I}}{E(k)}\rightarrow
\Hom{\tp{M}{I}}{E(k)}\rightarrow0.$$
Since $E(k)$ is an injective cogenerator, one has the next exact sequence
$$0\longrightarrow\tp{M}{I}\longrightarrow\tp{F}{I}\longrightarrow\tp{C}{I}\longrightarrow0.$$
Consequently, one has $\Tor{i\geq1}{I}{C}=0$ as $\Tor{i\geq1}{I}{M}=0$. It is not hard to see that $\Hom{C}{E(k)}\in\mathcal{B}(\widehat{R})$. Now proceeding in this manner, one could get the desired co-proper right flat resolution of $M$. This completes the proof.
\end{proof}

Recall that an $R$-module $M$ is \emph{Matlis reflexive} if $M\cong M^{vv}$ under the canonical homomorphism $M\rightarrow M^{vv}$. It is well known that $\widehat{R}$ is a Matlis reflexive $R$-module.

\begin{thm}
A finitely generated $R$-module $M$ is Gorenstein injective if and only if $\tp{\widehat{R}}{M}\in\mathcal{B}(\widehat{R})$ and $\Ext{i\geq1}{E}{M}=0$ for all injective $R$-modules $E$.
\end{thm}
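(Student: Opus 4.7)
The plan is to prove both directions in parallel with Theorems~A and~B. For the forward direction, assume $M$ is a finitely generated Gorenstein injective $R$-module. The vanishing $\Ext{i\geq 1}{E}{M}=0$ for every injective $R$-module $E$ is immediate from a totally acyclic complex of injectives whose zeroth cycle is $M$. For the Auslander-category condition, I would argue that $\widehat{M}=\tp{\widehat{R}}{M}$ has finite Gorenstein injective dimension over $\widehat{R}$ via ascent along the faithfully flat map $R\to\widehat{R}$ (whose target admits a dualizing complex), as recorded in the ascent tables of \cite{Christensen2009}; \cite[Theorem 4.4]{Christensen2006} then places $\widehat{M}$ in $\mathcal{B}(\widehat{R})$.

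For the converse, I follow the template of the proof of \thmref{GP}, adapted to the injective side. By Holm's characterization, it suffices to produce a co-proper left injective resolution $\cdots\to I_1\to I_0\to M\to 0$ of $M$ (that is, one that remains exact after applying $\Hom{I}{-}$ for every injective $R$-module $I$); combined with $\Ext{i\geq 1}{E}{M}=0$ this will assemble a totally acyclic complex of injectives with zeroth cycle $M$. The hypothesis $\tp{\widehat{R}}{M}\in\mathcal{B}(\widehat{R})$ together with \cite[Theorem 4.4]{Christensen2006} yields $\Gid{\widehat{M}}<\infty$, and a dual of \cite[Lemma 2.18]{Christensen2006} produces an exact sequence of $\widehat{R}$-modules $0\to G\to H\to\widehat{M}\to 0$ with $\idd{H}=\Gid{\widehat{M}}$. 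Since every injective $\widehat{R}$-module is injective as an $R$-module (because $R\to\widehat{R}$ is flat), $H$ has finite injective dimension over $R$ as well. Combining this with the monomorphism $M\hookrightarrow\widehat{M}$ (from faithful flatness) provides enough data to invoke Enochs' injective-precover construction and produce an injective precover $E_0\to M$. The decisive step is then to upgrade $E_0\to M$ to an $\mathcal{I}(R)$-precover: the hypothesis $\Ext{i\geq 1}{E}{M}=0$ extends, by dimension shifting, to $\Ext{i\geq 1}{L}{M}=0$ for every $L$ of finite injective dimension, and this yields the required lifting property---the injective analogue of the step promoting a flat preenvelope to an $\mathcal{F}(R)$-preenvelope in the proof of \thmref{GP}. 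Finally, the kernel of $E_0\to M$ still satisfies both hypotheses, so iteration builds the required resolution.

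The main obstacle is the construction and analysis of the injective precover in the converse. Injective precovers over a general commutative noetherian local ring are more delicate than the flat or projective preenvelopes used in Theorems~A and~B, and verifying the $\mathcal{I}(R)$-precover property from the Ext-vanishing hypothesis and the finite-injective-dimension data produced from $H$ requires careful bookkeeping. The finite-generation hypothesis on $M$ is essential at several points, most critically to ensure that $M\to\widehat{M}$ is a faithful embedding through which the Auslander-category information about $\widehat{M}$ can be transferred back to $M$ itself.
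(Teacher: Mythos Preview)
Your forward direction matches the paper's approach. The gap is in the converse, at the step where you produce an epic injective precover of $M$.

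To build a proper left injective resolution you need, at each stage, a \emph{surjection} from a module of finite injective dimension onto $M$; only then does the injective precover $E_0\to M$ become epic and allow iteration on the kernel. You obtain $H\twoheadrightarrow\widehat{M}$ with $\id{H}<\infty$, but then invoke the monomorphism $M\hookrightarrow\widehat{M}$. These maps point in opposite directions relative to $M$: from $H\twoheadrightarrow\widehat{M}\hookleftarrow M$ there is no way to extract a surjection onto $M$, and ``Enochs' injective-precover construction'' alone does not furnish an \emph{epic} precover without such input. This is not the dual of the argument in \thmref{GP}: there one needs an embedding of $M$ into a module of finite flat dimension, and $M\hookrightarrow M^{vv}\hookrightarrow\Hom{H}{E(k)}$ supplies it; here one needs the reverse arrow, and $M\hookrightarrow\widehat{M}$ does not help.

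The paper closes this gap by a genuinely different manoeuvre that you are missing: it tensors the surjection $H\to\tp{\widehat{R}}{M}$ with $\widehat{R}$ over $R$, then invokes the isomorphism $\tp{\widehat{R}}{\widehat{R}}\cong\widehat{R}$ of $R$-modules (valid because $\widehat{R}$ is Matlis reflexive, via \cite[Theorem~2]{Belshoff1994}) together with faithful flatness to descend to an $R$-linear surjection $H\to M$ with $\id{H}<\infty$. From there \cite[Lemma~2.3(ii)]{Esmkhani2007Arch} immediately yields an epic $\mathcal{I}(R)$-precover $E\to M$ with $E$ injective, so your subsequent ``upgrade'' step is unnecessary. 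Note also that your stated role for finite generation is off: the injectivity of $M\to\widehat{M}$ holds for every $M$ by faithful flatness; finite generation enters instead through the Matlis-reflexivity and completion arguments that produce the surjection onto $M$.
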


\begin{proof}
Let $M$ be a finitely generated Gorenstein injective $R$-module. By \cite[Theorem 3.6]{Foxby2007}, $\Gid{(\tp{\widehat{R}}{M})}$ is finite. Hence it follows from \cite[Theorem 4.4 and Lemma 2.2]{Christensen2006} that $\tp{\widehat{R}}{M}\in\mathcal{B}(\widehat{R})$ and $\Ext{i\geq1}{E}{M}=0$ for all injective $R$-modules $E$.

Conversely, it is enough to show that $M$ admits a proper left injective resolution; see \cite[Proposition 2.2]{Esmkhani2007Arch}. Since $\tp{\widehat{R}}{M}\in\mathcal{B}(\widehat{R})$, one has $\Gid{(\tp{\widehat{R}}{M})}$ is finite by \cite[Theorem 4.4]{Christensen2006}. It follows from \cite[Lemma 2.18]{Christensen2006} that there exists an exact sequence of $\widehat{R}$-modules $H\rightarrow \tp{\widehat{R}}{M}\rightarrow 0$, where $\idd{H}=\Gid{(\tp{\widehat{R}}{M})}$. Tensoring the above sequence with $\widehat{R}$ yields an exact sequence
$\tp{\widehat{R}}{H}\rightarrow\tp{\widehat{R}}{\tp{\widehat{R}}{M}}\rightarrow0$.
By \cite[Theorem 2]{Belshoff1994}, one has $\tp{\widehat{R}}{\widehat{R}}\cong \widehat{R}$ as $R$-modules for $\widehat{R}$ is a Matlis reflexive $R$-module. Consequently, one has the exact sequence $H\rightarrow M\rightarrow 0$ for $\widehat{R}$ is a faithfully flat $R$-module, where $\id{H}$ is finite for every injective $\widehat{R}$-module is injective as an $R$-module. It follows from \cite[Lemma 2.3(ii)]{Esmkhani2007Arch} that there exists an epic $\mathcal{I}(R)$-precover $E\rightarrow M$ with $E$ injective. Therefore, one has the following exact sequence
$$\xymatrix{
 &0 \ar[r]  &B \ar[r]  &E  \ar[r]^f &M  \ar[r] & 0, }$$
where $f$ is an $\mathcal{I}(R)$-precover and $B=\Ker{f}$. Next we show that $B$ satisfies the given assumptions on $M$.

Since $f$ is an $\mathcal{I}(R)$-precover, it is easy to see that $\Ext{i\geq1}{I}{B}=0$, where $I$ is an injective $R$-module. Now it remains to prove that $\tp{\widehat{R}}{B}\in\mathcal{B}(\widehat{R})$. Since $\widehat{R}$ is a flat $R$-module, one has the next exact sequence
$$0\longrightarrow\tp{\widehat{R}}{B}\longrightarrow\tp{\widehat{R}}{E}\longrightarrow
\tp{\widehat{R}}{M}\longrightarrow 0.$$
Since $E$ is isomorphic to a direct summand of $E(k)^X$ for some set $X$,  $\tp{E}{\widehat{R}}$ is isomorphic to a direct summand of $\tp{E(k)^X}{\widehat{R}}\cong \tp{E_{\widehat{R}}(\widehat{R}/\widehat{\mathfrak{m}})^X}{\widehat{R}}$. Therefore, $\tp{E}{\widehat{R}}$ is an injective $\widehat{R}$-module. It follows that $\tp{\widehat{R}}{B}\in\mathcal{B}(\widehat{R})$. Now proceeding in this manner, one could get the desired proper left injective resolution of $M$. This completes the proof.
\end{proof}

{\small
}

\vspace{1cm}

{\small {\em Authors' addresses}: {\em Dejun Wu and Yongduo Wang}\\ Department of
Applied Mathematics, Lanzhou University of Technology,
  Lanzhou, 730050, Gansu, China\\
{\em E-mail}: \texttt{wudj@lut.cn; ydwang@lut.cn}}

\end{document}